\theoremstyle{plain}
\newtheorem{theorem}{Theorem}[section]
\newtheorem{lemma}[theorem]{Lemma}
\theoremstyle{remark}
\def \cL {\mathcal L}
\def \i {\iota}
\def \E {\mathbb{E}}
\def \N {\mathbb{N}}
\def \P {\mathbb{P}}
\def \R {\mathbb{R}}
\def \Z {\mathbb{Z}}
\def \cG {\mathcal{G}}
\def \cL {\mathcal{L}}
\def \lp {\left(}
\def \rp {\right)}
\def \ls {\left\{}
\def \rs {\right\}}
\DeclareMathOperator{\TW}{TW}
\DeclareMathOperator{\GUE}{GUE}
\begin{document}
\begin{frontmatter}

\title{A universality property for large deviations of RWRE close to the axis}
\runtitle{A universality property  for RWRE}

\begin{aug}
\author[A,B]{\fnms{Pablo} \snm{Groisman}\ead[label=e1, mark]{pgroisma@dm.uba.ar}},
\author[B]{\fnms{Alejandro F.} \snm{Ram\'irez}\ead[label=e2,mark]{ar23@nyu.edu}}
\author[C]{\fnms{Santiago} \snm{Saglietti}\ead[label=e3,mark]{saglietti.sj@uc.cl}}
\and
\author[A]{\fnms{Sebasti\'an} \snm{Zaninovich}\ead[label=e4,mark]{szaninovich@dm.uba.ar}}
\address[A]{Fac. Cs. Exactas y Naturales, Universidad de Buenos Aires and IMAS UBA-CONICET,
\printead{e1,e4}}

\address[B]{NYU-ECNU Institute of Mathematical Sciences at NYU Shanghai
\printead{e2}}

\address[C]{Facultad de Matemáticas, Pontificia Universidad Católica de Chile
\printead{e3}}

\end{aug}

\begin{abstract} We establish a general version of the strong KPZ universality conjecture near the axis for random walks in a random environment (RWRE) on $\mathbb Z^2$. For an i.i.d. elliptic random environment, we consider the quenched large deviations probabilities for trajectories starting at the origin and arriving at time $n+[n^a]$ to the position $(n,[n^a])$
and show that, if the logarithm of the right-jump probability has a finite moment of order $p>2$, then for $a < \frac{3}{7}(1-\frac{2}{p})$ the fluctuations of these  propabilities are asymptotically governed by the GUE Tracy–Widom distribution. Our results are based on a comparison between RWRE and a last passage percolation model, whose asymptotic fluctuations near the axis were previously established by Bodineau-Martin \cite{BM05} and Baik-Suidan \cite{BS05}.
Furthermore, we obtain also the full convergence to the directed landscape in this regime based on the extension of the aforementioned results to this setting by McKeown and Zhang \cite{MZ25}.

\end{abstract}

\begin{keyword}[class=MSC]
\kwd{60K35}
\kwd{82C41}
\end{keyword}

\begin{keyword}
\kwd{random walk in random environment}
\kwd{large deviations}
\kwd{KPZ universality}
\kwd{Tracy-Widom distribution}
\kwd{fluctuations}
\end{keyword}

\end{frontmatter}


\section{Introduction and main results}

Random walk in random environment (RWRE) is a fundamental mathematical model which describes the movement of a particle in a highly disordered
media.  In its canonical setting, the model can be defined in its quenched version as a discrete time Markov chain with state space $\mathbb Z^d$, $d\ge 1$,
where the particle can only jump in one time step, from a given site $x\in\mathbb Z^d$ to a nearest neighboring site $x+e\in\mathbb Z^d$, $|e|_1=1$, according to
jump probabilities that form a set of independent vectors indexed by $x\in\mathbb Z^d$, the so-called environment.  The RWRE model is then obtained by choosing the environment at random according to some prescribed distribution. When $d = 2$ and the jump probabilities are uniformly elliptic, it is conjectured that this model satisfies the strong KPZ universality conjecture: under a specific scaling, the fluctuations
of the (quenched) large deviation probabilities of the walk are asymptotically distributed as the KPZ fixed point. In this article, we prove that
a version of this conjecture near the axis is verified under the sole assumption that the random environment is i.i.d., elliptic and that the logarithm of the jump probabilities has moments of order $p>2$. To our knowledge, this is the first  KPZ universality result for RWRE {for a general class of laws of the environment},  complementing the previous findings of Barraquand and Corwin \cite{BC17}, who
established the strong KPZ universality for the integrable Beta random walk model. 

To state our results, we first rigorously define the RWRE model. To this end, we fix some integer $d \geq 1$ and define $U \coloneqq\{e\in\mathbb Z^d: |e|_1=1\}$ together with
\[
\mathcal P \coloneqq \left\{\lp p(e) \rp_{e\in U} \;: \; p(e) \geq 0 \ \forall\, e \in U, \sum_{e\in U}p(e)=1\right\}.
\]
We call $\Omega:=\mathcal P^{\mathbb Z^d}$ the {\it environment space} of the RWRE and each $\omega \in \Omega$ an \textit{environment}. Notice that each $\omega \in \Omega$ is of the form $\omega=(\omega(x))_{x\in\mathbb Z^d}\in \Omega$, where $\omega(x)=(\omega(x,e))_{e\in U}\in\mathcal P$. A {\it random walk
  in the environment $\omega$} starting from $x\in\mathbb Z^d$, is a Markov chain $(X_n)_{n \in \N_0}$ with transition probabilities and initial distribution respectively given by
$$
P_{x,\omega}(X_{n+1}=y+e|X_n=y)=\omega(y,e) \qquad {\rm and} \qquad P_{x,\omega}(X_0=x)=1,
$$
for all $n\ge 0$ and $y\in\mathbb Z^d$. We call $P_{x,\omega}$ the \textit{quenched law} of the RWRE. If now we choose the environment $\omega$ at random according some distribution $\P$ on $\Omega$, we obtain a measure $P_x$ on $\Omega \times (\Z^d)^{\N_0}$ given by
\[
P_x(A\times B) = \int_A P_{x,\omega}(B) \mathrm{d}\P(\omega)
\] for each pair of Borel sets $A \subseteq \Omega$ and $B \in (\Z^d)^{\N_0}$. We  call $P_x$ the \textit{annealed law} of the RWRE and, in general, we say that the sequence $(X_n)_{n \in \N_0}$ under $P_x$ is a RWRE with \textit{environment law} $\P$. In the sequel, we will say that the environment $\omega$ is:
\begin{enumerate}
    \item \textit{i.i.d.} whenever the random vectors $(\omega(x))_{x \in \Z^d}$ are i.i.d. under $\P$;
    \item \textit{elliptic} if $\P$-a.s. $\omega(x,e)>0$ for all $x,e$;
    \item \textit{uniformly elliptic} if there exists $\kappa > 0$ such that $\P$-a.s. $\omega(x,e)\ge\kappa$ for all $x,e$.
\end{enumerate}

In 2003, Varadhan \cite{V03} established a quenched large deviation principle for RWRE. 
More precisely, he showed that, if $(X_n)_{n \in \N_0}$ is a random walk in a uniformly elliptic i.i.d.\ random environment, then 
there exists a lower semicontinuous function \(I_q:\mathbb{R}^d\to[0,\infty]\) such that, for every Borel set \(G\subset\mathbb{R}^d\),
\[
-\inf_{x\in G^\circ}I_q(x)
\leq \liminf_{n\to\infty}\frac{1}{n}\log P_{0,\omega}\left(\frac{X_n}{n}\in G\right)
\le \limsup_{n\to\infty}\frac{1}{n}\log P_{0,\omega}\left(\frac{X_n}{n}\in G\right)\le -\inf_{x\in \bar G}I_q(x),
\]
for $\mathbb P$-a.s. every $\omega\in\Omega$. Furthermore, if $\mathbb D:=\{x\in\mathbb R^d: |x|_1 \leq 1\}$, then $I_q(x)<\infty$
if and only if $x\in\mathbb D$ and $I_q$ is continuous in $\mathbb D^\circ$. This result was strengthened by Yilmaz in \cite{Y09}, weakening
the assumption  of uniform ellipticity to the assumption that {the environment is elliptic and that} there exists some {$p>d$} such that
\begin{equation}
  \label{five}
\mathbb E\left[|\log \omega(0,e)|^{{p}}\right]<\infty \qquad \text{ for all  }e \in U.
\end{equation}
On the other hand, the following subgaussian fluctuation behavior is conjectured in the case $d=2$, at least in the case of uniformly elliptic environments, corresponding to point-to-point results for the models of last passage percolation (LPP) and directed polymers. {In what follows, for $x\in\mathbb R^d$ we adopt the notation
$$
[x]:=([x_1],\ldots,[x_d]).
$$}

\noindent {\bf KPZ conjecture: } Let $d=2$. Consider a random walk in an i.i.d. uniformly elliptic random environment. Let $\mathbb O:=\{x\in\mathbb D: I_q(x)=0\}$ be the zero set of the quenched large deviation rate function. Then, there exists $\epsilon>0$ such that, for all $x\in\mathbb D \setminus \lp\mathbb O\cup \{0\} \rp$,
\[
\frac{\log P_{0,\omega}(X_n=[xn])-nI_q(x)}{n^{1/3}} \overset{d}{\longrightarrow}{\rm TW}_{\rm GUE},
\]
in the limit as $n \rightarrow \infty$, where ${\rm TW}_{\rm GUE}$ denotes the Tracy--Widom distribution associated with the Gaussian Unitary Ensemble (GUE), and the convergence in distribution is with respect to the environment law $\P$.

The GUE Tracy--Widom distribution is a central object in random matrix theory, as it governs the asymmptotic fluctuations of the maximal eigenvalue of a random matrix in the Gaussian Unitary Ensemble. See \cite{TW} for further details.

In this article, we prove a version of the KPZ conjecture {\it near the axis}. Let
\[
\R^4_\uparrow \coloneqq \{(x_1,x_2,y_1,y_2) \in \R^4 \colon x_2 < y_2 \},
\]
and, for $(x,y)= ((x_1,x_2),(y_1,y_2)) \in \R^4_\uparrow$, define
\begin{equation*}
    D(x,y) := -\log \lp P_{[ x],w}(X_{|[ x ]- [ y ]|_1} = [ y ] )\rp ,
\end{equation*}
together with its normalized version
\begin{equation*}
    \widehat{D} (x,y) := \frac{D (x,y) - \mu |[x]-[y]|_1}{\sigma},
\end{equation*}
where $\mu := \E \left[ \log (\omega (0,e_1)\right]$ and $\sigma := \textrm{Var} \lp \log \lp \omega (0,e_1)\rp\rp$, with $e_1:=(1,0)$; and its rescaled version
\begin{equation}
\label{eq: L_n,alpha}
    D_{ n, a}(x,y) := n^{\frac{a - 3}{6}} \widehat{D}((x)_n,(y)_n) -2(y_2-x_2) n^{\frac{2a}{3}} - (y_1 - x_1)n^{\frac{a}{3}},
\end{equation}
where, for $z =(z_1,z_2)\in \R^2$, we write $(z)_n := \lp z_2 n + 2z_1 n^{1- \frac{a}{3}}, [ z_2n^a] \rp$. Observe that all these quantities are random variables depending on the environment $\omega$, but in all cases we omit the dependence on $\omega$ for simplicity. 

Now, let $\mathcal L$ be the \textit{directed landscape}.
 Essentially, $\mathcal L$ is a random continuous~function from $\R_\uparrow^4$ to $\R$ arising as the universal scaling limit of many models of random directed metrics, such as last passage percolation with geometric or exponential weights (defined in Subsection~\ref{subsec: standard LPP} below). 
For a precise definition, further background and properties of the directed landscape, see \cite{DOV}. The goal of this article is to show that $D_{n,\alpha}$ converges in distribution as $n \to \infty$ to $\mathcal L$. There are various forms of convergence in law to $\cL$, all of which are discussed in detail in \cite{DV22}. The two more relevant for our purposes are those of \textit{compact} and \textit{hypograph} convergence. Compact convergence simply amounts to distributional convergence with respect to uniform convergence over compact sets. As for hypograph convergence, for our purposes it will only be important to know that it is weaker than compact convergence.  

Our main result is then the following.
\begin{theorem}
\label{theo: convergence to directed landscape}
Let us consider a random walk in a random environment $\omega$ on $\Z^2$ satisfying the following assumptions:
\begin{enumerate}
    \item [A1.] $\omega$ is i.i.d. and elliptic;
    \item [A2.] $\E[|\log \omega(0,e_1)|^p] < \infty$ for some $p > 2$.
\end{enumerate} Then, for any $0 < a < \frac{3}{7} \lp 1 - \frac{2}{p} \rp$, we have that 
\[
D_{n, a} \overset{d}{\longrightarrow} \cL
\] in the hypograph sense. If $p > 5$, then the convergence holds also in the compact sense.
\end{theorem}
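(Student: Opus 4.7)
The approach is a two-step reduction. We first express the quenched probability $P_{[x],\omega}(X_{|[x]-[y]|_1}=[y])$ as a polymer partition function over monotone up-right lattice paths, and then compare this polymer sum with a standard vertex-weight last-passage percolation (LPP) model with i.i.d.\ weights $\log\omega(v,e_1)$, to which the McKeown--Zhang \cite{MZ25} convergence theorem applies in the thin-rectangle regime dictated by our scaling. The discrepancy between the two models must be shown to be of smaller order than the fluctuation scale $n^{(3-a)/6}$, and it is from this estimate that the threshold $a<\frac{3}{7}(1-\frac{2}{p})$ emerges.

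Writing $\ell:=|[x]-[y]|_1$ and noting that for $n$ large the points $[x]=(x)_n,\ [y]=(y)_n$ satisfy $[y]_i\ge[x]_i$ for $i=1,2$, any trajectory realizing $\{X_\ell=[y]\}$ uses only the steps $e_1,e_2$, and so
\[
P_{[x],\omega}(X_\ell=[y])=\sum_{\pi:[x]\to[y]}e^{S_\pi(\omega)},\qquad S_\pi(\omega):=\sum_{v\in\pi\setminus\{[y]\}}\log\omega(v,e_\pi(v)),
\]
with $e_\pi(v)\in\{e_1,e_2\}$ the step of $\pi$ at $v$. Since there are at most $e^{O(n^a\log n)}$ such paths, the log-sum-exp inequality yields $\max_\pi S_\pi\le\log P_{[x],\omega}(X_\ell=[y])\le\max_\pi S_\pi+O(n^a\log n)$. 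Splitting $S_\pi$ into horizontal and vertical contributions gives
\[
S_\pi(\omega)=\sum_{v\in\pi}W(v)-W([y])+\sum_{v\in U_\pi}\xi(v),
\]
with $W(v):=\log\omega(v,e_1)$, $\xi(v):=\log(\omega(v,e_2)/\omega(v,e_1))$, and $U_\pi$ the set of up-step vertices of $\pi$, of cardinality $k\sim n^a$. The first term is the standard vertex-weight LPP $L(x,y):=\max_\pi\sum_{v\in\pi}W(v)$ with i.i.d.\ weights $W$ having finite $p$-th moment by A2, and McKeown--Zhang's theorem asserts that $L$, rescaled as in the definition of $D_{n,a}$, converges to $\cL$ in the hypograph (resp.\ compact) sense for $p>2$ (resp.\ $p>5$).

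It therefore remains to show $\max_\pi S_\pi=L(x,y)+o(n^{(3-a)/6})$ with high probability. The combinatorial error $O(n^a\log n)$ in the sandwich is negligible as soon as $a<3/7$. For the upper bound on $\max_\pi S_\pi-L(x,y)$, the crucial observation is that $\xi(v)^+\le|W(v)|$ (since $\omega(v,e_2)\le 1$), so $\sum_{v\in U_\pi}\xi(v)^+$ is dominated uniformly in $\pi$ by the sum of the top $k$ values of $|W(\cdot)|$ in the surrounding box of area $\sim n\cdot n^a$; a standard order-statistics estimate for i.i.d.\ variables with finite $p$-th moment shows this top-$k$ sum to be of order $n^{a+1/p}$, and the requirement $n^{a+1/p}=o(n^{(3-a)/6})$ reduces exactly to $a<\frac{3}{7}(1-\frac{2}{p})$. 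The main obstacle is the matching lower bound, which requires evaluating $S_\pi$ along a near-geodesic $\pi^*$ of $L(x,y)$ and controlling $\sum_{v\in U_{\pi^*}}\xi(v)$ from below: since A2 gives no moment information on $\log\omega(v,e_2)$, there is no uniform control on $\xi(v)^-$, and a supremum bound mirroring the upper direction fails. The resolution must combine ellipticity, the random structure of the geodesic $\pi^*$ (which depends only on $(W(v))_v$), and transversal fluctuation estimates to confine $\sum_{v\in U_{\pi^*}}\xi(v)$ within $o(n^{(3-a)/6})$ with high probability. Combining this with the MZ convergence of $L$ and the above sandwich then yields the claimed convergence of $D_{n,a}$ to $\cL$.
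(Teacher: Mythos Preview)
Your overall framework matches the paper exactly: sandwich $\log P$ between $\max_\pi S_\pi$ and $\max_\pi S_\pi + O(n^a\log n)$, compare $\max_\pi S_\pi$ with the vertex-weight LPP $L$, and invoke McKeown--Zhang. Your threshold computation $a+\tfrac1p<\tfrac12-\tfrac{a}{6}$ is also the right one.

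The gap is in the lower bound, and your proposed ``resolution'' is not a proof. The paper does not treat the two directions asymmetrically and never touches geodesics or transversal fluctuations. Instead it bounds $\lvert\max_\pi S_\pi-L\rvert$ in one stroke (this is Lemma~\ref{lemma: L - L tilde}): for any up-right path $\gamma=(z_0,\dots,z_\ell)$, the difference between its edge-weight and its vertex-weight passage times is supported on the up-step vertices, where it is at most $|\log\omega(v,e_1)|+|\log\omega(v,e_2)|$. Since an up-right path has at most one up-step per row, this is bounded \emph{uniformly over all paths} by $\sum_{j=1}^{k}M_j^{(n)}$, with $M_j^{(n)}$ the row-$j$ maximum of $|\log\omega(\cdot,e_1)|+|\log\omega(\cdot,e_2)|$, and $\E[M_j^{(n)}]\le n^{1/p}\|Y_{0,0}\|_p$. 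This single symmetric estimate dissolves your ``main obstacle'' and gives $\E\big[\max_{(x,y)}|D-L|\big]=O(kn^{1/p})$ directly.

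You are right, however, that this step uses $\E[|\log\omega(0,e_2)|^p]<\infty$, which is not literally contained in A2; the paper's proof invokes it implicitly through $\|Y_{0,0}\|_p$. Your workaround sketch does not avoid this: ellipticity is purely qualitative and gives no quantitative bound on $-\log\omega(v,e_2)$, and the $L$-geodesic $\pi^*$ is not independent of $(\xi(v))_v$ since both depend on $\omega(v,e_1)$ at each site. Either add the $e_2$-moment to the hypotheses and use the paper's row-max argument---which is far simpler than anything involving geodesic geometry---or find a genuinely new mechanism for the lower bound under A2 alone; the ``ellipticity $+$ geodesic structure $+$ transversal fluctuations'' route you describe is not one.
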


As a consequence of Theorem~\ref{theo: convergence to directed landscape}, we have the following version of the KPZ conjecture near the axis. 
\begin{theorem}
  \label{theorem1} Let us consider a random walk in a random environment $\omega$ on $\Z^2$ under the same hypotheses of Theorem \ref{theo: convergence to directed landscape} and define 
\[
\sigma :={\rm Var}(\log\omega(0,e_1)).
\]
Then, the following limits hold:
  \begin{itemize}
  \item[(i)] If $a < \frac{6}{7}(\frac{1}{2}-\frac{1}{p})$, then, as $n\to\infty$,
\[
\frac{\log P_{0,\omega}(X_{n+[n^a]}=(n,[n^a]))-n I_q(e_1) -2\sigma\sqrt{n^{1+a}}}{\sigma\sqrt{n^{1-\frac{a}{3}}}} \overset{d}{\longrightarrow}  {\rm TW}_{\rm GUE}.
\]
\item[(ii)] For any fixed $k \in \N$, as $n\to\infty$,
\[
  \frac{\log P_{0,\omega}(X_{n+k}=(n,k))-n I_q(e_1) }{\sqrt{n}} \overset{d}{\longrightarrow} \lambda_k,
\]
where $\lambda_k$ is the largest eigenvalue of a $k\times k$ \rm{GUE} random matrix. 
\item[(iii)] If $a < \frac{1}{2}-\frac{1}{p}$, then, as $n \to \infty$,
\[
\frac{\log P_{0,\omega}(X_{n+[n^a]}=(n,[n^a]))-nI_q(e_1)-2\sigma\sqrt{n^{1+a}}}{\sqrt{n}}\overset{d}{\longrightarrow} 0.
\] 
\end{itemize}
\end{theorem}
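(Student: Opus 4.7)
The plan is to deduce Theorem~\ref{theorem1} from Theorem~\ref{theo: convergence to directed landscape} by evaluating at suitable points, and, for the parts outside its range, by adapting the underlying RWRE--LPP comparison. The crucial observation is that the rescaling $z\mapsto(z)_n=(z_2n+2z_1n^{1-a/3},[z_2n^a])$ sends $(0,0)\mapsto(0,0)$ and $(0,1)\mapsto(n,[n^a])$, so that $\widehat{D}((0,0),(n,[n^a]))$ is precisely the centered log-probability of the event $\{X_{n+[n^a]}=(n,[n^a])\}$ starting at $0$. Using the identity $I_q(e_1)=-\mu$ (which follows from the LLN, since the only length-$n$ path from $0$ to $ne_1$ is the horizontal one and $\tfrac1n\sum_k\log\omega(ke_1,e_1)\to\mu$ a.s.), direct algebra on the definitions of $\widehat{D}$ and $D_{n,a}$ produces an identity of the form
\[
\sigma\sqrt{n^{1-a/3}}\,D_{n,a}((0,0),(0,1))
= -\bigl[\log P_{0,\omega}(X_{n+[n^a]}=(n,[n^a]))+nI_q(e_1)+2\sigma\sqrt{n^{1+a}}\bigr] + I_q(e_1)[n^a].
\]

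For part (i), divide through by $\sigma\sqrt{n^{1-a/3}}=\sigma n^{(3-a)/6}$ and invoke Theorem~\ref{theo: convergence to directed landscape}: hypograph convergence implies one-point distributional convergence at continuity points of the limit (see \cite{DV22}), so $D_{n,a}((0,0),(0,1))\overset{d}{\longrightarrow}\cL((0,0),(0,1))$, and the one-point distribution of the directed landscape at $((0,0),(0,1))$ is the GUE Tracy--Widom law~\cite{DOV}. The residual term $I_q(e_1)[n^a]/(\sigma n^{(3-a)/6})$ has order $n^{(7a-3)/6}$, which tends to $0$ whenever $a<3/7$; this is ensured by the hypothesis $a<(3/7)(1-2/p)$, and (i) follows (up to the overall sign convention in the statement of $\mathrm{TW}_\mathrm{GUE}$).

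For part (ii), $k$ is held fixed and Theorem~\ref{theo: convergence to directed landscape} does not apply directly. The strategy is to reuse the RWRE--LPP comparison underlying Theorem~\ref{theo: convergence to directed landscape} but now with target the point-to-point last-passage time in the strip $\Z\times\{0,\dots,k\}$. By the classical thin-strip limit of Baik--Suidan and Bodineau--Martin~\cite{BM05,BS05}, this LPP quantity, centered by $-\mu(n+k)$ and divided by $\sigma\sqrt{n}$, converges in distribution to $\lambda_k$; the RWRE--LPP comparison error is $o(\sqrt n)$ under A2 with $p>2$, so the $\sqrt n$-scale limit is inherited.

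For part (iii), the conclusion is strictly weaker than distributional convergence. When $a<(3/7)(1-2/p)$, (iii) follows from (i) combined with $n^{(3-a)/6}/\sqrt n\to 0$ for every $a>0$. In the remaining range $a\in[(3/7)(1-2/p),\tfrac12-\tfrac1p)$, Theorem~\ref{theo: convergence to directed landscape} is unavailable, so the plan is to bound the RWRE deviation by the LPP fluctuation (still $n^{(3-a)/6}=o(\sqrt n)$) plus the RWRE--LPP comparison error, and to show through an $L^p$ Marcinkiewicz--Zygmund type estimate that the latter is controlled at scale $n^{1/2-1/p+o(1)}$; the restriction $a<\tfrac12-\tfrac1p$ is precisely what forces both contributions to be $o(\sqrt n)$. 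The main obstacle lies in this $L^p$-quantitative control of the comparison error outside the distributional regime of Theorem~\ref{theo: convergence to directed landscape}, since the directed-landscape input is no longer at our disposal there.
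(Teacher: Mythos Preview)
Your arguments for parts (i) and (ii) are essentially the paper's own. For (i) the paper likewise evaluates Theorem~\ref{theo: convergence to directed landscape} at $x=(0,0)$, $y=(0,1)$, noting as an alternative the direct route via Lemma~\ref{lemma: L - L tilde} together with the Bodineau--Martin/Baik--Suidan LPP result (Theorem~\ref{theo: Bodineau and Martin}(i)); this alternative avoids the subtlety of extracting one-point convergence from hypograph convergence, which you gloss over. For (ii) you reproduce the paper's plan exactly: Lemma~\ref{lemma: L - L tilde} plus the fixed-$k$ LPP limit.

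Your treatment of (iii), however, overcomplicates matters and misidentifies the obstacle. There is no need to split on whether $a$ lies in the range of Theorem~\ref{theo: convergence to directed landscape}. The RWRE--LPP comparison of Lemma~\ref{lemma: L - L tilde} is valid for \emph{every} $a$: it gives $\mathbb E\bigl|D((0,0),(n,[n^a]))-L((0,0),(n,[n^a]))\bigr|\le C\,n^{a+1/p}$, which is $o(\sqrt n)$ exactly under the hypothesis $a<\tfrac12-\tfrac1p$. So the ``$L^p$-quantitative control of the comparison error'' that you flag as the main obstacle is already available, with no Marcinkiewicz--Zygmund input needed. What you are actually missing is the LPP side: your claim that the LPP fluctuation ``is still $n^{(3-a)/6}$'' in the range $a\in[(3/7)(1-2/p),\tfrac12-\tfrac1p)$ is not justified, since the Tracy--Widom limit for generic-weight LPP near the axis is only established for $a<(3/7)(1-2/p)$. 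The paper instead invokes a separate Bodineau--Martin statement, Theorem~\ref{theo: Bodineau and Martin}(iii), which asserts directly that $(L((0,0),(n,[n^a]))-n\mu-2\sigma\sqrt{n^{1+a}})/\sqrt n\to 0$ for all $a<\tfrac12-\tfrac1p$; this follows from their coupling $(L-\mu(n+k_n)-\sigma\sqrt n\,\lambda_{k_n})/(k_n n^{1/p+\varepsilon})\to 0$ combined with $\lambda_{k_n}=2\sqrt{k_n}+O(k_n^{-1/6})$. Combining this with Lemma~\ref{lemma: L - L tilde} gives (iii) in one stroke, with no case split.
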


The assertion in (i) above confirms the validity of the KPZ conjecture for points near (but not too near) the $x$ axis. On other hand, (ii) shows that fluctuations \textit{very} near the axis do not satisfy the KPZ conjecture, and in fact scale as in the standard central limit theorem (CLT), albeit with non-Gaussian asymptotic distribution (unless $k=1$). Finally, (iii) shows that, for points further away from the axis, fluctuations are still of smaller size than that in the CLT. We remark that an analogous result holds also for the $y$-axis, with the appropriate changes in the moment hypotheses. The proof is analogous.

{There is a special case in which the environment is given by Beta random variables, the so-called \textit{Beta random walk}. In this case, the walk is allowed to jump only to the right with a probability given by a Beta random variable $B(\alpha,\beta)$ of parameters $\alpha>0$ and $\beta>0$ and upwards with a probability given by $1-B(\alpha,\beta)$. For this environment, the strong KPZ conjecture was proven by Barraquand and Corwin \cite{BC17} for the case $\alpha=\beta=1$, then extended  by Koroktchik \cite{K22} to the case $\alpha=1$ and $\beta>0$ and by Oviedo, Panizo and Ram\'\i rez \cite{OPR22} for the case $\alpha>0.7$ and $\beta>0$.}
On the other hand, since for $\alpha>0$ and $\beta>0$, $\log B(\alpha,\beta)$ has moments of arbitrary order, Theorem \ref{theorem1} includes the case of a Beta random walk for arbitrary values of $\alpha>0$ and $\beta>0$, complementing the previous results \cite{BC17,K22,OPR22}.


For directed models at zero temperature, this type of fluctuation limits near the axis are already known: Bodineau and Martin \cite{BM05} and Baik and Suidan \cite{BS05} proved an analogue of Theorem~\ref{theorem1} for LPP with general weights under similar moment conditions and McKeown and Zhang \cite{MZ25} subsequently generalized these results to obtain the analogue of Theorem~\ref{theo: convergence to directed landscape}, convergence to the directed landscape. Analogous results for a directed variation of ballistic deposition were proved by the present authors in \cite{GRSZ}.

For directed polymers, Auffinger, Baik and Corwin \cite{ABC12} proved convergence to the $\mathrm{TW}_{\mathrm{GUE}}$ distribution for the fluctuations of the partition function in complete analogy to our Theorem~\ref{theorem1}, but under a finite fourth moment assumption. Their proofs differ with ours in the couplings used to bound the fluctuations (and also in that they do not show convergence to the directed landscape). The approach developed here can also be applied to the directed polymer model, yielding a slight improvement in the range of~$a$. Whereas \cite{ABC12} requires $a<3/14$, our techniques work up to $a<\frac3 7(1-\frac2p)$, under the sole assumption $p>2$. Moreover, with our method we can also obtain convergence to the directed landscape in this setting as in Theorem~\ref{theo: convergence to directed landscape}.

The proof of Theorem \ref{theo: convergence to directed landscape}, given in the next section, is based on a comparison between RWRE and the LPP model.
Bodineau and Martin \cite{BM05} and Baik and Suidan \cite{BS05} proved independently a universality result for general LPP models near the axis, showing that the asymptotic fluctuations of the marginals are given by the GUE Tracy-Widom distribution. This was recently extended by McKeown and Zhang \cite{MZ25} to full convergence to the directed landscape $\mathcal L$ in the same regime. Using these results in combination with our comparison results, i.e. Lemma \ref{lemma: L - L tilde} below, immediately yields Theorem \ref{theo: convergence to directed landscape}. The proof of Theorem \ref{theorem1}, which is similar, is given in Section \ref{proof.thm1}.

  \section{Proof of Theorem \ref{theo: convergence to directed landscape}}
  \label{sec: Proof of main theorem} 
This section is devoted to the proof of Theorem~\ref{theo: convergence to directed landscape}. As discussed in the Introduction, we obtain the result by comparing our process with a specific LPP model for which similar results have already been established. We do this comparison in two steps, via an auxiliary process we call the LPP model corresponding to an environment. In the following subsections we present both LPP models to be used in this two-step comparison.
  
  \subsection{Last Passage Percolation model corresponding to an environment}
  \label{subsec: LPP associated to RWRE}
Consider the directed graph $(\mathbb Z^2, \mathbb E^2)$, where $\mathbb E^2:=
  \{ (x,y) : x,y\in\mathbb Z^2, |x-y|_1=1\}$. Given an environment $\omega$, we define the {\it weights} of the edges of the graph
  $(\mathbb Z^2,\mathbb E^2)$ by
\[
W_{(x,x+e)}:=-\log \omega(x,e), \qquad {\rm for}\ {\rm all}\ (x,x+e)\in\mathbb E^2.
\]
Define an \textit{up-right path} from $x$ to $y$ to be a vector $(x_0,...,x_{n}) \in (\Z^2)^{n+1}$ for some $n \in \N$ such that $x_0 = x$, $x_n = y$ and $x_{i} - x_{i-1} \in \ls (1,0),(0,1)\rs$ for all $i \in \ls 1,...,n\rs$. The set of up-right paths from $x$ to $y$ is given by
\begin{equation}
\label{eq: uprights paths set}
\cG(x,y):=\ls (x_0,\ldots,x_n): x_0=x, \ x_n=y, \ (x_0,...,x_n)\text{ is an up-right path}\rs .
\end{equation}
Now, given a path $\gamma=(x_0,\ldots,x_n)\in\mathcal G(x,y)$, define the {\it passage time of the path $\gamma$} as
\[
t_\gamma := \sum_{i=0}^{n-1} W_{(x_i,x_{i+1})}.
\]
Also, define the {\it last passage time} between $x$ and $y$ as
\[
T(x,y) := \sup \{ t_\gamma : \gamma \in \mathcal G(x,y) \}.
\]
Finally, we say that a path $\gamma^* \in \mathcal G(x,y)$ is a {\it geodesic path} between $x$ and $y$ if
\[
T(x,y) = t_{\gamma^*}.
\]

Since $\mathcal G(x,y)$ is finite, there always exists at least one geodesic path between $x$ and $y$. For certain choices of the weights, the geodesic may not be unique.
We call the above model the {\em LPP model corresponding to the environment $\omega$}.
The following lemma is the key step in the proof of Theorem \ref{theorem1}. 

\begin{lemma}
  \label{lemma: comparision rwre with associated lpp}
  Consider  a random walk in the environment $\omega$ and its corresponding LPP model.
 Then, given $n \in \N$, for every $x,y\in\mathbb Z^2$ such that $|x-y|_1=n$, we have that
  \begin{equation}
    \label{one}
\mathrm{e}^{-T(x,y)}\le P_{x,\omega}(X_n=y)\le n^{|x_2-y_2|} \mathrm{e}^{-T(x,y)}.
  \end{equation}
\end{lemma}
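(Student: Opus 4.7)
The plan is to translate the quenched transition probability into a sum over paths, identify it with passage-time exponentials from the LPP model, and then sandwich the sum between its maximum term and (number of terms)$\times$(maximum term).

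First, I would observe that since $|x-y|_1 = n$, any trajectory of $(X_k)_{0\le k\le n}$ that goes from $x$ at time $0$ to $y$ at time $n$ cannot waste any steps: it must be a monotone nearest-neighbor path (going only in the two coordinate directions pointing from $x$ towards $y$). By the Markov property of the quenched walk,
\[
P_{x,\omega}(X_n = y) \;=\; \sum_{\gamma \in \mathcal G(x,y)} \prod_{i=0}^{n-1} \omega(x_i, x_{i+1}-x_i),
\]
where (up to obvious sign conventions for the monotone directions) $\mathcal G(x,y)$ is the set of monotone lattice paths from $x$ to $y$. Using $\omega(x_i,x_{i+1}-x_i) = \exp\bigl(-W_{(x_i,x_{i+1})}\bigr)$, each summand equals $e^{-t_\gamma}$, so
\[
P_{x,\omega}(X_n=y) \;=\; \sum_{\gamma \in \mathcal G(x,y)} e^{-t_\gamma}.
\]

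The lower bound in \eqref{one} is then immediate: the sum is at least its largest term, which by definition of $T(x,y)$ equals $e^{-T(x,y)}$ (attained along any geodesic). For the upper bound, I would bound the sum by $|\mathcal G(x,y)|\cdot e^{-T(x,y)}$ and then count monotone paths: if $y-x$ has coordinates of absolute values $a$ and $b$ with $a+b=n$ and $b=|x_2-y_2|$, then $|\mathcal G(x,y)| = \binom{n}{b} \le n^{b} = n^{|x_2-y_2|}$.

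There is no real obstacle here; the only subtlety worth flagging is the initial step of recognising that $|x-y|_1=n$ forces the walk to be monotone, which is what makes the combinatorial identification with $\mathcal G(x,y)$ (and hence with the corresponding LPP weights) exact rather than merely an inequality. Once this is said, the two bounds are just (i) "sum $\ge$ max summand" and (ii) "sum $\le$ cardinality $\times$ max summand", combined with the trivial bound $\binom{n}{b}\le n^{b}$ on the number of monotone paths.
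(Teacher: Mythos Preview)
Your proof is correct and follows essentially the same route as the paper: write $P_{x,\omega}(X_n=y)=\sum_{\gamma\in\mathcal G(x,y)} e^{-t_\gamma}$, bound the sum below by its largest term and above by $|\mathcal G(x,y)|\cdot e^{-T(x,y)}$, then use $|\mathcal G(x,y)|=\binom{n}{|x_2-y_2|}\le n^{|x_2-y_2|}$. If anything, your explicit remark that $|x-y|_1=n$ forces the walk to be monotone (so the decomposition over $\mathcal G(x,y)$ is exact) is a point the paper leaves implicit.
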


\begin{proof} Note that, if $\gamma^*=(x_0^*,\ldots,x_n^*)$ is any geodesic path between $x$ and $y$, then
  $$
  P_{x,\omega}(X_n=y)=\sum_{\gamma\in\mathcal G(x,y)} \prod_{i=0}^n \omega(x_i,x_{i+1}-x_i)
  \ge \prod_{i=0}^n \omega(x^*_i,x^*_{i+1}-x^*_i)=e^{-t_{\gamma^*}}=e^{-T(x,y)}.
  $$ This proves the lower bound of
  \eqref{one}. On the other hand,
  since, for each $(x_0,...x_n) \in~\cG(x,y)$,
\[
\mathrm{e}^{-T(x,y)}\ge \prod_{i=0}^n \omega(x_i,x_{i+1}-x_i),
\]
we have the upper bound
\begin{equation}
  \label{two}
P_{x,\omega}(X_n=y)\le |\mathcal G(x,y)| \mathrm{e}^{-T(x,y)},
\end{equation}
where $|\mathcal G(x,y)|$ is the number of up-right paths between $x$ and $y$. It is straightforward to check that
\begin{equation}
  \label{three}
|\mathcal G(x,y)| = \binom{n}{|x_2-y_2|}\le n^{|x_2-y_2|}.
\end{equation}
Hence, by substituting \eqref{three} into \eqref{two}, we conclude the proof of the upper bound.
\end{proof}

\subsection{Standard Last Passage Percolation}
\label{subsec: standard LPP}
Given a family of i.i.d. random variables $(\tau_x)_{x \in \Z^d}$, for $x,y \in \Z^2$ we define the $\tau$-\textit{last passage time} between $x$ and $y$ as
\begin{equation}
\label{eq: def tilde L}
    L(x,y) := \sup \sum_{i = 0}^n \tau_{x_i},
\end{equation}
where the supremum is taken over all $\gamma = (x_0,...,x_n) \in \cG(x,y)$. Let us define $\widehat{L}(x,y)$ and $L_{n, a}(x,y)$ in the same manner as $\widehat{D}(x,y)$ and $D_{n,a} (x,y)$ in \eqref{eq: L_n,alpha}. The main difference between an LPP model corresponding to an environment and a standard LPP model is that the random variables of the first one are assigned to the edges of $\Z^2$ while the latter assigns weights to the vertices. Also, notice that the weights assigned to the edges in the LPP model through the environment $\omega$ are neither independent, nor identically distributed. 
To prove Theorem \ref{theo: convergence to directed landscape}, we will exploit the fact that $L_{n,a}$ converges to the directed landscape for any $0< a < \frac{3}{7}(1 - \frac{2}{p})$ under appropriate moment conditions.

\begin{theorem}[\cite{MZ25}]
\label{theo:mckeown-zhang}
Let $L$ be a standard LPP model with i.i.d. weights $(\tau_x)_{x \in \Z^d}$ satisfying that $\E | \tau_{0}|^p < \infty$ for some $p > 2$. Then, for any $0 < a < \frac{3}{7} \lp 1 - \frac{2}{p} \rp$,  $L_{n, a} \xrightarrow{d} \cL$ in the hypograph sense. If $p > 5$, the convergence is also in the compact sense.
\end{theorem}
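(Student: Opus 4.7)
The plan is to follow the strategy of Baik--Suidan \cite{BS05} and Bodineau--Martin \cite{BM05}, upgraded to process-level convergence as in McKeown--Zhang \cite{MZ25}. The key observation is that in the near-axis regime the LPP from $(x)_n$ to $(y)_n$ involves up-right paths crossing only $K=O(n^{a})$ horizontal rows, each of horizontal extent $O(n)$. Decomposing $L((x)_n,(y)_n)$ by the entry/exit columns between consecutive rows, it rewrites as a maximum over $K$-tuples of ordered transition points of a sum of $K$ row partial sums of the i.i.d.\ weights $(\tau_x)$.

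First, I would apply a strong Gaussian embedding of Komlós--Major--Tusnády / Sakhanenko type to couple, on a common probability space, the partial sum process in each row with a Brownian motion of drift $\mu=\E\tau_0$ and variance $\s^2=\mathrm{Var}(\tau_0)$, with error $O(n^{1/p})$ in sup-norm (with high probability) under the $p$-th moment assumption. Performing this independently across the $K\sim n^{a}$ rows and taking a union bound, the total sup-norm coupling error is of order $O(n^{a+1/p})$. After this replacement, the original LPP becomes, up to an additive error of the same order, a semi-discrete Brownian LPP across $K$ independent Brownian motions.

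Next, I would invoke the convergence of semi-discrete Brownian LPP to the directed landscape $\cL$ in the same near-axis window (equivalent, up to time-change, to exponential/geometric LPP, for which this limit theory is by now standard via the Dauvergne--Virág construction). Under the scaling in \eqref{eq: L_n,alpha}, the natural fluctuation scale of $\widehat L$ is $n^{(3-a)/6}$, so the rescaled coupling error contributes $O(n^{(a-3)/6}\cdot n^{a+1/p})$, which is $o(1)$ exactly when
\[
a+\tfrac{1}{p}<\tfrac{3-a}{6}\qquad\Longleftrightarrow\qquad a<\tfrac{3}{7}\lp1-\tfrac{2}{p}\rp.
\]
This is precisely the threshold in the statement, and identifies $\cL$ as the limit of $L_{n,a}$.

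Hypograph convergence only requires one-sided and finite-dimensional-type comparisons and goes through in the entire stated range. Compact convergence is more delicate: it requires a tightness/modulus-of-continuity estimate uniform over compact subsets of $\R^4_\uparrow$. The stronger $p>5$ assumption enters here to control, via maximal-type inequalities, the $\sup$ over the $K$ rows of the KMT error simultaneously with the fluctuations of Brownian LPP. The main obstacle is precisely Steps 1 and 4 combined: producing a coupling whose error, summed over $K=n^{a}$ rows, respects the sharp exponent $\frac{3}{7}(1-\frac{2}{p})$ — naive unioning of Sakhanenko tail bounds loses logarithmic factors, and these must be absorbed within the (arbitrarily small) gap between $a$ and the threshold, using the room provided by the strict inequality.
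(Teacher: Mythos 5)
This statement is not proved in the paper at all: it is imported verbatim from \cite{MZ25} (building on \cite{BM05,BS05}), and the paper only uses it as a black box. Your sketch correctly reconstructs the strategy of that reference --- row-by-row KMT/Sakhanenko coupling of the partial-sum processes with independent Brownian motions, reduction to a semi-discrete Brownian LPP over $K\sim n^{a}$ lines, and comparison of the accumulated coupling error $O(n^{a+1/p})$ against the fluctuation scale $n^{(3-a)/6}$ --- and your exponent arithmetic $a+\frac1p<\frac12-\frac{a}{6}\Leftrightarrow a<\frac{3}{7}\lp 1-\frac{2}{p}\rp$ matches the stated threshold. Be aware, however, that the two genuinely hard ingredients are asserted rather than argued: (i) the coupling must be process-level, i.e.\ uniform over all entry/exit columns and over all endpoints $(x,y)$ in a compact set simultaneously, not just for a single point-to-point passage time, and this is where the distinction between hypograph convergence (for all $p>2$) and compact convergence (requiring $p>5$) actually arises; and (ii) the convergence of Brownian LPP with a \emph{diverging} number $K=n^{a}$ of lines, in the transversal near-axis window rather than the standard diagonal KPZ window of \cite{DOV}, to the directed landscape is precisely the content of \cite{MZ25} and is not ``standard'' in the off-the-shelf sense. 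As an outline of the cited proof your proposal is faithful; as a self-contained proof it leaves exactly the parts that make \cite{MZ25} a paper.
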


\subsection{Proof of Theorem~\ref{theo: convergence to directed landscape}}

In this section we give the proof of Theorem~\ref{theo: convergence to directed landscape}. The first element of the proof is to compare the RWRE with a suitable standard LPP model, via the LPP corresponding to the environment of the RWRE. 

\begin{lemma}
\label{lemma: L - L tilde}
    Let $\omega$ be a random environment $\omega$ on $\Z^2$ satisfying the assumptions of Theorem~\ref{theo: convergence to directed landscape} and consider the standard LPP model of weights $\tau_{x}:=W_{(x,x+e_1)}$.
    Then, there exists a constant $C>0$, which depends only on the distribution of $\omega$, such that, for each $t > 0$ and $n,k \in \N$, we have
    \begin{equation}
    \label{eq: mean between L and tilde L}
        \E \left[ \max_{\substack{(x,y) \in \Lambda_t(n,k)}} |D(x,y) - L(x,y)| \right] \leq Ct^2k(n+k)^{\frac{1}{p}},
    \end{equation} where 
    \[
   \Lambda_t(n,k) := \left\{ (x,y) \in \Z^4 \cap \lp[0,tn]  \times [0, tk] \rp^2 \colon  x_i\leq y_i \text{ for }i \in \{1,2\} \right\}. 
    \]
\end{lemma}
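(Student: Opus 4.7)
The strategy is to insert the LPP $T$ corresponding to the environment from Section~\ref{subsec: LPP associated to RWRE} as an intermediary between $D$ and $L$: write $|D(x,y)-L(x,y)| \leq |D(x,y)-T(x,y)| + |T(x,y)-L(x,y)|$ and estimate each piece on $\Lambda_t(n,k)$. Throughout set $\alpha_v := W_{(v,v+e_1)} = \tau_v$ and $\beta_v := W_{(v,v+e_2)}$; both are non-negative $\P$-a.s.\ by ellipticity. Denote by $\ell_\gamma := \sum_{v\in\gamma}\tau_v$ the sum driving the standard LPP $L$.

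Lemma~\ref{lemma: comparision rwre with associated lpp} immediately gives $|D(x,y)-T(x,y)| \leq |y_2-x_2|\log |[x]-[y]|_1$, which uniformly on $\Lambda_t(n,k)$ is $O(tk\log((t+1)(n+k)))$ and thus of lower order than the target $t^2k(n+k)^{1/p}$ for $n+k$ large.

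For the comparison of $T$ and $L$, a direct computation yields the path-wise identity
\[
\ell_\gamma - t_\gamma \;=\; \alpha_y + \sum_{v\in V(\gamma)}(\alpha_v-\beta_v)
\]
for every up-right path $\gamma$ from $x$ to $y$, where $V(\gamma)$ is the set of vertices at which $\gamma$ makes a vertical step (so $|V(\gamma)|=y_2-x_2$). Evaluating this at the $L$-geodesic $\gamma^L$ and using $\beta_v\geq 0$ gives
\[
L(x,y) - T(x,y) \;\leq\; \alpha_y + \sum_{v\in V(\gamma^L)} \alpha_v,
\]
a bound involving only $\alpha$-values --- the weights for which A2 supplies $p$-th moments. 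Taking the maximum over $(x,y)\in\Lambda_t(n,k)$, I would control the right-hand side by the sum of the top $\lceil tk\rceil$ order statistics of $\{\alpha_v : v \in [0,tn]\times[0,tk]\cap\Z^2\}$ and then appeal to the classical estimate $\E\sum_{j=1}^K \alpha^{(j)} \leq CK^{1-1/p}N^{1/p}$ with $N\asymp t^2nk$ and $K\asymp tk$, delivering the required order.

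The \textbf{main obstacle} is the opposite inequality $T - L$: evaluating the same identity at the $T$-geodesic $\gamma^T$ and using $\alpha_v\geq 0$ instead yields $T-L \leq \sum_{v\in V(\gamma^T)} \beta_v - \alpha_y$, and the $\beta_v$'s carry \emph{no} moment assumption under A2. To sidestep this, I plan not to pass through $T$ in this direction and to bound $D-L$ from above directly. By ellipticity one can choose a constant $M>0$ with $\P(\beta_v\leq M)>0$, and a concentration argument then ensures that on the typical environment one can route, for each $(x,y)\in\Lambda_t(n,k)$, a family of up-right paths from $x$ to $y$ whose every vertical step lands at a site with $\beta_v\leq M$. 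Lower bounding $P_{[x],\omega}(X_{|[x]-[y]|_1}=[y])$ by the sum over such a family converts the $\beta$-contribution into a benign additive $O(Mtk)$ and, after a combinatorial count contributing at most $O(tk\log(n+k))$, yields $D(x,y)\leq L(x,y) + O(tk\log(n+k))$. Combining this with the $L-T$ bound above and the first piece then closes the estimate.
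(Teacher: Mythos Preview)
Your first two steps match the paper: the $|D-T|$ estimate via Lemma~\ref{lemma: comparision rwre with associated lpp} is identical, and your identity $\ell_\gamma - t_\gamma = \alpha_y + \sum_{v\in V(\gamma)}(\alpha_v-\beta_v)$ is precisely the observation underlying the paper's argument. The paper, however, does not split the comparison of $T$ and $L$ into two one-sided inequalities. It simply takes absolute values: since for every up-right path $\gamma$ in the box one has $|\ell_\gamma-t_\gamma|\le \alpha_y+\sum_{v\in V(\gamma)}(\alpha_v+\beta_v)$, and since $V(\gamma)$ meets each row at most once, this gives the \emph{path-independent} bound
\[
|\ell_\gamma - t_\gamma|\;\le\;\sum_{j=0}^{[tk]} M_j^{(n)},\qquad M_j^{(n)}:=\max_{0\le i\le tn}\bigl(\alpha_{(i,j)}+\beta_{(i,j)}\bigr),
\]
uniformly over all $\gamma$. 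Applying it to both the $L$- and $T$-geodesics yields $|L-T|\le\sum_j M_j^{(n)}$ in one stroke, after which $\E[M_j^{(n)}]\le\|M_j^{(n)}\|_p\le (tn)^{1/p}\|\alpha_0+\beta_0\|_p$ finishes the proof. Your order-statistics estimate for the one-sided bound $L-T$ is correct, but the row-wise maximum is shorter and handles both directions at once.

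Your worry that $\beta_v$ carries no moment hypothesis under A2 as written is a fair reading of the assumption, but the proposed routing workaround cannot succeed. The set $\Lambda_t(n,k)$ contains pairs with $x_1=y_1$; for these there is a unique (purely vertical) path, no re-routing is possible, and
\[
D(x,y)-L(x,y)\;=\;\sum_{j=x_2}^{y_2-1}\beta_{(x_1,j)}\;-\;\sum_{j=x_2}^{y_2}\alpha_{(x_1,j)}.
\]
If $\E[\beta_0]=\infty$ then already this single pair makes the left-hand side of \eqref{eq: mean between L and tilde L} infinite, so the lemma cannot hold without some integrability of $\beta$. In other words, the obstacle you flagged is not something a cleverer argument can circumvent: the paper's proof tacitly uses $\|\alpha_0+\beta_0\|_p<\infty$ (it invokes $\|Y_{0,0}\|_p$ with $Y_{i,j}=\alpha_{(i,j)}+\beta_{(i,j)}$), and you should treat that as part of the standing assumptions. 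Once you do, the short two-sided argument above replaces the entire second half of your plan.
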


\begin{proof}[Proof of Lemma \ref{lemma: L - L tilde}]
    By Lemma \ref{lemma: comparision rwre with associated lpp} we have that, for $(x,y)\in \Lambda_t(n,k)$,
    $$|D(x,y) - T(x,y)| \leq tk \log(tn+tk) \leq t^2k(n+k)^{\frac{1}{p}}, \quad \P\text{-a.s.}$$ Then, it is enough to prove that
    \begin{equation*}
        \E \left[ \max_{\substack{(x,y) \in \Lambda_t(t,k)}} |L(x,y) - T(x,y)| \right] \leq C t^2k(n+k)^{\frac{1}{p}}.
    \end{equation*}
    To this end, notice that, given a path $\gamma = (z_0,...,z_k) \in \cG(x,y)$, we have
    \begin{equation*}
        \left|\sum_{i} \tau_{z_i} -  \sum_{i} W_{(z_i, z_{i+1})}\right| \leq  \sum_{\substack{i \colon \\ z_{i+1}-z_i = (0,1)}} |\tau_{z_i}| + |W_{(z_i, z_{i+1})}|.
    \end{equation*}
    Thus, we can bound this quantity by
    \begin{equation}
    \label{eq: sum M_k}
        \sum_{j = 1}^{[ tk ]} M_j^{(n)},
    \end{equation}
    where $M_j^{(n)} := \max \ls \tau_x + W_{(x,x + (0,1)) } \colon 1 \leq x_1 \leq n, x_2 = j \rs$. For simplicity, call 
    \[
    Y_{i,j} := |\tau_{(i,j)}| + \left| W_{(i,j), (i,j+1)} \right|.
    \] Since \eqref{eq: sum M_k} is uniform over all paths starting at $x$ and ending at $y$  with $(x,y)\in \Lambda_t(n,k)$, we have that
    \begin{equation*}
        \max_{\substack{\substack{(x,y) \in \Lambda_t(t,k)}}} |L(x,y) - T(x,y)| \leq \sum_{j = 1}^{[ tk ]} M_j^{(n)}.
    \end{equation*}
The variable $M_k^{(n)}$ is the maximum of $n$ random variables with finite $p$-$th$ moment and thus, by Jensen's and the triangle inequalities, we have that
\begin{equation*}
    \E\left[M_k^{(n)}\right] \leq \| (M_k^{(n)})\|_p \leq n^{1/p} \|Y_{0,0}\|_p, 
\end{equation*}
and, therefore,
\begin{equation*}
    \E \left[\sum_{j=1}^{[ tk ]} M_j^{(n)}\right] \leq [tk]n^{1/p} \|Y_{0,0}\|_p,
\end{equation*}
from which the result follows.
\end{proof}



We are now ready to prove Theorem \ref{theo: convergence to directed landscape}.

\begin{proof}[Proof of Theorem \ref{theo: convergence to directed landscape}]
Let $t ,\delta > 0$. By Lemma \ref{lemma: L - L tilde}, we get that
\begin{align*}
    &\P\Bigg(
        \max_{(x,y)\in\big[-\tfrac{t}{2},\tfrac{t}{2}\big]^4}
        \big| D_{n,a}(x,y)- L_{n,a}(x,y) \big|
        \ge \delta
    \Bigg)
    \\
    \le\;&
    \P\Bigg(
        \max_{\substack{(x,y)\in \Lambda_t(n,n^a)}}
        \big|D(x,y)- L(x,y)\big|
        \ge \sigma \delta\, n^{\frac12-\frac{a}{6}}
    \Bigg)
    \\
    \le\;&
        \E \left[ \max_{\substack{(x,y) \in \Lambda_t(n,n^a)}} |D(x,y) - L(x,y)| \right]  \sigma \delta^{-1} n^{-(a+\frac1p+\varepsilon)}\\
      \le  \; & C \sigma t^2n^a(n+n^a)^{\frac{1}{p}} \delta^{-1} n^{-(a+\frac1p+\varepsilon)}\\
            \le  \; & C \sigma t^2 \delta^{-1} n^{-\varepsilon}.
\end{align*} Above, in the third line, we chose $\varepsilon>0$ small enough so that $a + \frac{1}{p } +\varepsilon< \frac{1}{2} - \frac{a}{6}$ and then used Markov's inequality while, for the bound in the last line, we used Lemma~\ref{lemma: L - L tilde}.  This implies that $D_{n,a} -~L_{n,a}$ converges in probability with respect to the topology of uniform convergence over compact sets. In combination with Theorem \ref{theo:mckeown-zhang}, this implies Theorem~\ref{theo: convergence to directed landscape}.
\end{proof}

\section{Proof of Theorem \ref{theorem1}}
\label{proof.thm1}
For the proof of Theorem \ref{theorem1}, we will use Lemma~\ref{lemma: L - L tilde} together with an analogue of Theorem \ref{theorem1} but for the standard LPP model. The latter is essentially contained in the works of Bodineau and Martin \cite{BM05} and Baik and Suidan \cite{BS05}. Indeed, if for each $n \in \N$ we let $\lambda_n$ denote the largest eigenvalue of a $n \times n$ \rm{GUE} matrix, then it is well known (see e.g. \cite{TW}) that 
\begin{equation}\label{eq:bm1}
n^{\frac{1}{6}}(\lambda_n -2\sqrt{n}) \overset{d}{\longrightarrow} \TW_{\GUE}.
\end{equation} Moreover, in \cite{BM05} it is shown that, for a standard LPP model with i.i.d. weights $(\tau_x)_{x \in \Z^2}$ satisfying $\E[|\tau_0|^p]< \infty$ for some $p>2$, then, for any sequence $(k_n)_{n \in \N} \subseteq \N$ there exists a coupling between $L((0,0),(n,k_n))$ and  $\lambda_{k_n}$ such that, as $n \to \infty$,
\begin{equation}\label{eq:bm2}
\frac{L((0,0),(n,k_n)) - \mu(n+k_n) - \sigma\sqrt{n}\lambda_{k_n}}{\sigma k_nn^{1/p+\varepsilon}} \overset{d}{\longrightarrow} 0
\end{equation} for each $\varepsilon > 0$, where above $\mu := \mathbb E[\tau_0]$ and $\sigma^2:= {\rm Var} (\tau_{0})$. Upon combining \eqref{eq:bm1}--\eqref{eq:bm2}, we arrive at the following result.

\begin{theorem}
  \label{theo: Bodineau and Martin}
  Consider a standard LPP model as in Subsection  \ref{subsec: standard LPP}. Suppose that $\mathbb E[| \tau_{0}|^p]< \infty$ for some $p > 2$ and define $\mu := \mathbb E[\tau_0]$ and $\sigma^2:= {\rm Var} (\tau_{0})$.
Then, the following limits hold:
  \begin{itemize}
  \item[(i)] If $a < \frac{6}{7}(\frac{1}{2}-\frac{1}{p})$, then, as $n\to\infty$,
\[
\frac{L((0,0),(n, [n^a]))-n\mu-2\sigma \sqrt{n^{1+a}}}{\sigma \sqrt{n^{1-\frac{a}{3}}}} \overset{d}{\longrightarrow} {\rm TW}_{\rm GUE}.
\]
    \item[(ii)] For any fixed $k \in \N$, as $n\to\infty$,
\[
\frac{L((0,0),(n, k))-n\mu}{ \sigma\sqrt{n}}\to \lambda_k,
\]
where $\lambda_k$ is the largest eigenvalue of a $k\times k$ \rm{GUE} random matrix.
\item [iii)] If $a < \frac{1}{2}-\frac{1}{p}$, then, as $n \to \infty$, 
\[
\frac{L((0,0),(n,[n^a]))-n\mu-2\sigma\sqrt{n^{1+a}}}{\sqrt{n}}\overset{d}{\longrightarrow} 0.
\] 
\end{itemize}
\end{theorem}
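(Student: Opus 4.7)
\textbf{Proof proposal for Theorem \ref{theo: Bodineau and Martin}.} The plan is to derive all three statements from a single identity, namely the coupling \eqref{eq:bm2} of Bodineau--Martin, combined (when needed) with the classical Tracy--Widom asymptotics \eqref{eq:bm1} for the top eigenvalue of the GUE. Writing the coupling as
\[
L((0,0),(n,k_n)) \;=\; \mu(n+k_n) + \sigma\sqrt{n}\,\lambda_{k_n} + R_n,
\]
where $R_n = o_p(\sigma k_n n^{1/p+\varepsilon})$ for any $\varepsilon>0$, each of the three cases becomes a matter of choosing $k_n$, choosing the normalization, and verifying that the remainder and the drift terms are absorbed into an $o_p(1)$ after rescaling.

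For part (ii), I fix $k_n = k$ and divide by $\sigma\sqrt{n}$. The term $\mu k/(\sigma\sqrt{n})$ is deterministic and tends to $0$; the remainder contributes $o_p(k n^{1/p+\varepsilon-1/2})$, which vanishes provided $\varepsilon<\tfrac12-\tfrac1p$ (possible since $p>2$). Slutsky's theorem then yields $(L((0,0),(n,k))-\mu n)/(\sigma\sqrt{n}) \to \lambda_k$ in distribution.

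For part (i), I take $k_n=[n^a]$ and use \eqref{eq:bm1} to write $\lambda_{[n^a]} = 2\sqrt{[n^a]} + [n^a]^{-1/6}\,\mathrm{TW}_{\mathrm{GUE}} + o_p([n^a]^{-1/6})$ in distribution, so that
\[
\sigma\sqrt{n}\,\lambda_{[n^a]} \;=\; 2\sigma\sqrt{n^{1+a}} + \sigma\,n^{\frac12-\frac{a}{6}}\,\mathrm{TW}_{\mathrm{GUE}} + o_p(\sigma n^{\frac12-\frac{a}{6}}).
\]
Subtracting $n\mu + 2\sigma\sqrt{n^{1+a}}$ and dividing by $\sigma\sqrt{n^{1-a/3}} = \sigma n^{\frac12-\frac{a}{6}}$, the leftover drift $\mu n^a$ contributes $\mu n^{7a/6 - 1/2}/\sigma$ and the remainder contributes $o_p(n^{7a/6+1/p+\varepsilon-1/2})$. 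Both are $o(1)$ precisely when $7a/6 + 1/p + \varepsilon < 1/2$, i.e.\ for $a$ strictly less than $\tfrac{6}{7}(\tfrac12-\tfrac1p)$, matching the hypothesis; one more application of Slutsky's theorem identifies the limit as $\mathrm{TW}_{\mathrm{GUE}}$. For part (iii), I use the same coupling with $k_n=[n^a]$ but divide by $\sqrt{n}$ and group terms as $\sigma(\lambda_{[n^a]}-2\sqrt{[n^a]})$, which by \eqref{eq:bm1} is $O_p(n^{-a/6})\to 0$; the drift $\mu n^{a-1/2}$ and the remainder $o_p(n^{a+1/p+\varepsilon-1/2})$ both vanish under $a<\tfrac12-\tfrac1p$.

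The main delicate point is conceptual rather than computational: \eqref{eq:bm2} is a genuine coupling (convergence in probability on a joint space of $L$ and $\lambda_{k_n}$), whereas \eqref{eq:bm1} is only a distributional statement about $\lambda_n$. In part (i), where I need to combine the two, I should be careful to pass through the coupling first to reduce the problem to a pure statement about $\lambda_{[n^a]}$, and then invoke \eqref{eq:bm1} together with Slutsky. Everything else is bookkeeping of exponents, and the three threshold conditions on $a$ emerge transparently from balancing the coupling error $n^{1/p+\varepsilon}$ against the normalizations $n^{1/2-a/6}$ and $n^{1/2}$.
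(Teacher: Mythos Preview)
Your proposal is correct and follows exactly the paper's approach: the paper simply states that the theorem follows ``upon combining \eqref{eq:bm1}--\eqref{eq:bm2}'', and you carry out precisely this combination, supplying the exponent bookkeeping and the Slutsky arguments that the paper leaves implicit. Your handling of the one genuine subtlety---that \eqref{eq:bm2} is a coupling statement (convergence in probability to zero) which can therefore be combined with the purely distributional \eqref{eq:bm1} via Slutsky---is also correct.
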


With Theorem~\ref{theo: Bodineau and Martin} at our disposal, we are ready to prove Theorem~\ref{theorem1}.

\begin{proof}[Proof of Theorem \ref{theorem1}]
    Item (i) is a direct corollary of Theorem \ref{theo: convergence to directed landscape} in the case $x = (0,0)$ and $y = (0,1)$ (alternatively, we can use Lemma~\ref{lemma: L - L tilde} and (i) in Theorem \ref{theo: Bodineau and Martin}). Items (ii) and (iii) are a consequence of Lemma \ref{lemma: L - L tilde} combined with Theorem \ref{theo: Bodineau and Martin} items (ii) and (iii), respectively.
\end{proof}

\noindent {\bf Acknowledgements.}
Part of this work was carried out during visits of some of the authors to NYU Shanghai, Pontificia Universidad Cat\'olica de Chile and Universidad de Buenos Aires. The authors would like to thank the institutions for their hospitality and financial support.
Alejandro Ram\'irez was partially supported by NFSC 12471147 grant and by NYU Shanghai Boost Fund.
Pablo Groisman and Sebasti\'an Zaninovich were partially supported by CONICET Grant PIP 2021 11220200102825CO, UBACyT Grant 20020190100293BA and PICT 2021-00113 from Agencia I+D. Santiago Saglietti was partially supported by Fondecyt Grant 1240848.

\bibliographystyle{abbrv}
\bibliography{biblio}
\end{document}